\numberwithin{equation}{section}
\title[The Heim and Neuhauser Conjecture]{Towards Heim and Neuhauser's Unimodality Conjecture on the Nekrasov-Okounkov polynomials}
\date{August 2020}
\author{Letong Hong, Shengtong Zhang}
\address{Department of Mathematics, Massachusetts Institute of Technology, Cambridge, MA 02139}
\email{clhong@mit.edu, stzh1555@mit.edu}
\begin{document}
\begin{abstract}
\vspace{-1em}
Let $Q_n(z)$ be the polynomials associated with the Nekrasov-Okounkov formula
 $$\sum_{n\geq 1} Q_n(z) q^n := \prod_{m = 1}^\infty (1 - q^m)^{-z - 1}.$$
In this paper we partially answer a conjecture of Heim and Neuhauser, which asks if $Q_n(z)$ is unimodal, or stronger, log-concave for all $n \geq 1$. Through a new recursive formula, we show that if $A_{n,k}$ is the coefficient of $z^k$ in $Q_n(z)$, then $A_{n,k}$ is log-concave in $k$ for $k \ll n^{1/6}/\log n$ and monotonically decreasing for $k \gg \sqrt{n}\log n$. We also propose a conjecture that can potentially close the gap.
\vspace{-0.5em}
\end{abstract}
\maketitle
\section{Introduction}
In their groundbreaking work \cite{NO06},  Nekrasov and Okounkov showed the hook length formula\footnote{This formula was also obtained concurrently by Westbury(see Proposition 6.1 and 6.2 of \cite{Westbury06}.) and Han(see \cite{Han})}
\begin{equation}
 \sum_{\lambda}q^{\abs{\lambda}}\prod_{h\in \cH(\lambda)}\bigp{1 + \frac{z}{h^2}} = \prod_{m = 1}^\infty (1 - q^m)^{-z - 1},
\end{equation}
where $\lambda$ runs over all Young tableaux, $\abs{\lambda}$ denotes the size of $\lambda$, and $\cH(\lambda)$ denotes the multiset of hook lengths associated to $\lambda$. We define
\begin{equation}
Q_n(z) = \sum_{\abs{\lambda} = n}\prod_{h\in \cH(\lambda)}\bigp{1 + \frac{z}{h^2}}.
\end{equation}
For example, we can calculate that
\begin{align*}
    Q_0(z) &= 1, \\
    Q_1(z) &= 1 + z, \\
    Q_2(z) &= 2 + \frac{5}{2}z + \frac{1}{2}z^2, \\
    Q_3(z) &= 3 + \frac{29}{6} z + 2z^2 + \frac{1}{6}z^3.
\end{align*}
The polynomials $Q_n(z)$ are of degree $n$ with positive coefficients and satisfy
\begin{equation}
\label{eq:defn}
 \sum_{n = 0}^\infty Q_n(z) q^n = \prod_{m = 1}^\infty (1 - q^m)^{-z - 1}.
\end{equation}
 The study of $Q_n(z)$ was initiated by D'Arcais in \cite{Arcais13}\footnote{D'Arcais defined the polynomial $P_n(z) = Q_n(z - 1)$ via the infinite product, not the hook number expression.}. More recently in \cite{HN18} and \cite{HN20}, Heim and Neuhauser have been investigating the number theoretic and distributional properties of the $Q_n(z).$ They proved the identity (\cite{HN18}, Conjecture 1)
\begin{equation}
\label{eq:Heim}
Q_n(z) =  \sum_{\abs{\lambda} = n}\prod_{h\in \cH(\lambda)^\diamond}\bigp{1 + \frac{z}{h}}
\end{equation}
where $\cH(\lambda)^\diamond$ denotes the multiset of hook lengths associated to $\lambda$ with trivial legs. 

In \cite{HN18}, Heim and Neuhauser conjectured that the polynomials $Q_n(z)$ are \emph{unimodal}. In other words, let the coefficient of $z^k$ in $Q_n(z)$ as $A_{n,k}$; then there exists some integer $k_1\in [0,n]$ such that $A_{n, i} \le A_{n,i + 1}$ when $0 \leq i < k_1$ and $A_{n, i} \ge A_{n,i + 1}$ when $k_1 \leq i < n$. 
They verified via computation that up to $n \leq 1000$, the polynomials $Q_n(z)$ are in fact \emph{log-concave}, which means that $A_{n, k}^2 \geq A_{n, k - 1}A_{n, k + 1}$ for all $1 \leq k \leq n - 1$. In this paper we make partial progress towards Heim and Neuhauser's conjecture. We show that the polynomial $Q_n(z)$ is log-concave at the start, and monotone decreasing at the tail. Throughout the rest of the paper, the constants in $O$, $\gg$ and $\ll$ are absolute unless otherwise stated.
\begin{theorem}
\label{thm:main-1}
For $n$ sufficiently large, we have

(1) For $k \ll \frac{n^{1/6}}{\log n}$, we have $A_{n, k}^2 \geq A_{n, k - 1}A_{n, k + 1}.$

(2) For $k \gg \sqrt{n}\log n$, we have $A_{n, k} \geq A_{n, k + 1}.$
\end{theorem}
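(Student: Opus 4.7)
The natural starting point is to differentiate the identity \eqref{eq:defn} with respect to $z$. Writing $f(q) := -\sum_{m \geq 1} \log(1-q^m) = \sum_{m \geq 1} \sigma_{-1}(m) q^m$, we have $\sum_n Q_n(z) q^n = e^{(z+1)f(q)}$, so $\sum_n Q_n'(z) q^n = f(q) \sum_n Q_n(z) q^n$. Extracting the coefficient of $z^k q^n$ yields the workhorse recursion
\begin{equation*}
(k+1)A_{n,k+1} = \sum_{m=1}^n \sigma_{-1}(m)\, A_{n-m, k},
\end{equation*}
which is presumably the ``new recursive formula'' alluded to in the abstract. Both parts of the theorem will follow by controlling the ratios $A_{n-m,k}/A_{n,k}$ via this formula.

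\textbf{Tail (part 2).} The key structural claim is that $A_{n,k}/p(n)$ is non-decreasing in $n$ for every fixed $k$. Indeed, $\sum_n A_{n,k} q^n = f(q)^k P(q)/k!$ with $P(q) = \prod_m (1-q^m)^{-1}$, so $A_{n,k}$ is a non-negative linear combination of shifts $p(n-j)$; and the log-concavity of $p(n)$ (DeSalvo--Pak) implies that $p(n-j)/p(n)$ is non-decreasing in $n$ for each fixed $j$. Hence $A_{n-m,k}/A_{n,k} \leq p(n-m)/p(n)$, and substituting into the recursion,
\[
(k+1)\frac{A_{n,k+1}}{A_{n,k}} \;\leq\; \sum_{m=1}^n \sigma_{-1}(m)\, \frac{p(n-m)}{p(n)} \;=\; \frac{A_{n,1}}{p(n)}.
\]
A Hardy--Ramanujan estimate combined with the crude bound $\sigma_{-1}(m) \ll \log m$ yields $A_{n,1}/p(n) = O(\sqrt{n}\log n)$, so $A_{n,k+1} \leq A_{n,k}$ whenever $k \gg \sqrt{n}\log n$.

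\textbf{Start (part 1) and the main obstacle.} For log-concavity, the inequality $A_{n,k}^2 \geq A_{n,k-1}A_{n,k+1}$ rewrites via the recursion as $(k+1) R_{k-1} \geq k R_k$, where $R_k := \sum_m \sigma_{-1}(m)\, A_{n-m,k}/A_{n,k}$. Hardy--Ramanujan suggests $A_{n,k} \approx p(n) S_n^k/k!$ for small $k$, with $S_n = A_{n,1}/p(n) \sim \pi\sqrt{n/6}$; this leading ``Poisson'' term has log-concavity ratio exactly $(k+1)/k$, so it suffices to control the error $\epsilon_k(n)$ tightly enough that $(k+1)/k$ is not consumed. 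The plan is to expand $p(n-m)/p(n) = \exp(-\pi m/\sqrt{6n})\bigl(1 + O(m^2/n^{3/2})\bigr)$, propagate the quadratic correction through the $k$-fold convolution of $f(q)$, and bound the resulting relative error in $A_{n,k}$. The main technical challenge, and the reason for the $n^{1/6}/\log n$ threshold, is that the error compounds across the $k$ factors: the relevant second difference $2\epsilon_k - \epsilon_{k-1} - \epsilon_{k+1}$, once multiplied by the $k$ appearing in the log-concavity comparison, must remain dominated by the slack $1/k$, and absorbing extra logarithmic losses from $\sigma_{-1}(m) \ll \log m$ and its number-theoretic irregularity is precisely what dictates $k \ll n^{1/6}/\log n$. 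The substantive obstacle — and the gap to the tail regime $\sqrt{n}\log n$ that prevents one from closing the full Heim--Neuhauser conjecture — is to produce a sufficiently uniform higher-order asymptotic for $A_{n,k}$ along the whole intermediate range of $k$.
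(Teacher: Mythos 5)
Your recursion $(k+1)A_{n,k+1}=\sum_{m}\sigma_{-1}(m)A_{n-m,k}$ is indeed the paper's key identity (the case $(a,b)=(k,k+1)$ of Lemma \ref{lemma:recursive}). For part (2) your route is genuinely different from the paper's: the paper proves the tail statement via the Heim--Neuhauser identity $\sum_k A_{n,k}z^k=\sum_{\lvert\lambda\rvert=n}\prod_j\binom{k_j+z}{k_j}$, real-rootedness (hence unimodality) of each factor product, and Sibuya's Stirling-number bounds to place each summand's mode at $O(\sqrt n\log n)$; you instead bound $A_{n-m,k}/A_{n,k}\le p(n-m)/p(n)$ by viewing $A_{\cdot,k}$ as a fixed nonnegative combination of shifts of $p$, and sum against $\sigma_{-1}$ to get $(k+1)A_{n,k+1}/A_{n,k}\le A_{n,1}/p(n)=O(\sqrt n\log n)$. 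This is a clean and more elementary argument, but one caveat must be repaired: DeSalvo--Pak log-concavity of $p$ holds only from $n=26$ on, and the ratio $p(N-m)/p(N)$ is \emph{not} monotone in $N$ when $N-m$ is small (e.g.\ $p(0)/p(1)=1>p(1)/p(2)$), so the termwise inequality $p(n-m-j)\,p(n)\le p(n-m)\,p(n-j)$ underlying your claim fails for the finitely many $j$ with $n-m-j<26$. These boundary terms are exponentially negligible and can be split off, but then your inequality only holds up to a $(1+o(1))$ factor rather than exactly; the conclusion $A_{n,k+1}\le A_{n,k}$ for $k\gg\sqrt n\log n$ survives, since the implied constant is at your disposal.

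For part (1), however, what you have is a plan, not a proof. You correctly identify that the leading term gives the ratio $(k+1)/k$ and that the error must stay below the slack $1/k$ (which is exactly where the threshold $k\ll n^{1/6}/\log n$ comes from), but the crucial estimate --- a relative-error bound for $A_{n,k}$ that is uniform enough in $k$, and coherent across $k-1,k,k+1$ --- is never established. ``Propagate the quadratic correction through the $k$-fold convolution'' is precisely the hard step: the individual coefficients $c_{n,k}$ of $f^k$ are poorly understood, and the paper itself only controls their \emph{partial sums} (Lemma \ref{lemma:c-asymp}). The paper then sidesteps the delicate expansion you propose by a structural device you do not have: combining \eqref{eq:recursive-2} with the partial-sum estimate and Hardy--Ramanujan, it shows $A_{n,k}=\bigp{1+O\bigp{(2\log n)^{-k}+k^2\log^2 n/\sqrt n}}\widetilde A_{n,k}$, where $\widetilde A_{n,k}$ is \emph{exactly} log-concave in $n$, being a convolution of the log-concave sequences $p(\cdot)$ (DeSalvo--Pak) and $\binom{\cdot}{k-1}$ by Hoggar's theorem; the inequality $\widetilde A_{n,k}\widetilde A_{n-i-j,k}\le\widetilde A_{n-i,k}\widetilde A_{n-j,k}$, summed against $\sigma_{-1}(i)\sigma_{-1}(j)$ in the two instances $(a,b)=(k,k+1)$ and $(k,k+2)$ of Lemma \ref{lemma:recursive}, then gives $A_{n,k}A_{n,k+2}\le\frac{k}{k+1}\bigp{1+O(\cdot)}A_{n,k+1}^2$, and the error is below $1/k$ when $n\gg k^6\log^6 k$. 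Without an argument of this kind (or a genuinely uniform asymptotic for $c_{n,k}$, which is essentially the content of Conjecture \ref{conjecture}), your part (1) remains an unproved outline.
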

\begin{remark}
By ``For $k \ll \frac{n^{1/6}}{\log n}$, we have $\cdots$", we mean that there exists an absolute constant $\kappa > 0$ such that the statement holds for $k < \kappa\frac{n^{1/6}}{\log n}$. We will use this notation throughout the rest of the paper. All the constants could be explicitly computed if one carefully traces the proof.
\end{remark}
We also reduce Heim and Neuhauser's conjecture to a more ``explicit'' conjecture. For positive integers $n$, define
$$\sigma_{-1}(n) = \sum_{d \vert n} d^{-1},$$
and define $f(q)$ to be its generating function
\begin{equation}
f(q) = \sum_{n \geq 1} \sigma_{-1}(n)q^n.    
\end{equation}
We are interested in the behavior of $c_{n,k}$, the coefficient of $q^n$ in $f^k(q)$.\footnote{Using the notation $c_{n,k}$ instead of $c_{k,n}$ is more convenient for our purposes.}
\begin{conjecture}
\label{conjecture}
There exists a constant $C > 1$ such that for all $k \geq 2$ and $n \leq C^k$, we have
$$c^2_{n, k} \geq c_{n - 1, k}c_{n + 1, k}.$$
\end{conjecture}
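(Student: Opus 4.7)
The strategy I would pursue is the \emph{exponential tilting} method paired with a sharp saddle-point or local central limit theorem (LCLT) asymptotic. For a real parameter $r \in (0,1)$, consider the probability distribution $P_r$ on $\mathbb{Z}_{\geq 1}$ given by $P_r(m) := \sigma_{-1}(m) r^m/f(r)$. If $X_1, \dots, X_k$ are i.i.d.\ samples from $P_r$ and $S_k := X_1 + \cdots + X_k$, then expanding the $k$-fold convolution yields
\[
P(S_k = n) \;=\; \frac{c_{n,k}\, r^n}{f(r)^k}.
\]
Since $\log P(S_k = n) - \log c_{n,k} = n \log r - k \log f(r)$ is affine in $n$, log-concavity of $c_{n,k}$ in $n$ is equivalent to log-concavity of the point probabilities $n \mapsto P(S_k = n)$, \emph{for every} choice of tilt $r$. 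This flexibility is what lets us choose the tilt optimally.

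The natural choice is $r = r(n,k) \in (0,1)$ solving the saddle equation $r f'(r)/f(r) = n/k$, so that $S_k$ has mean exactly $n$. The function $\mu(r) := r f'(r)/f(r)$ increases smoothly from $1$ to $\infty$ on $(0,1)$, so this specifies $r$ uniquely whenever $n > k$, and the variance of $S_k$ is $k\sigma_r^2$, with $\sigma_r^2$ extracted from the second logarithmic derivative of $f$. The hypothesis $n \leq C^k$ translates, via the modular-type asymptotic $f(e^{-t}) \sim \pi^2/(6t)$ as $t \to 0^+$, into $r$ being bounded away from $1$: writing $r = e^{-t}$ one has $\mu(r) \sim 1/t$, so $t \asymp k/n \geq k\cdot C^{-k}$. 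This exponential (but positive) lower bound on $1 - r$ is what keeps the tilted distribution analytically tractable.

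Once the tilt is fixed, an LCLT for $S_k$, refined by a saddle-point contour expansion of $c_{n,k} = \frac{1}{2\pi i}\oint f(q)^k q^{-n-1}\,dq$, should give an asymptotic
\[
P(S_k = n + j) \;=\; \frac{1}{\sqrt{2\pi k \sigma_r^2}} \exp\!\bigl(-j^2/(2k\sigma_r^2)\bigr)\,\bigl(1 + \varepsilon_{k,r}(j)\bigr)
\]
for $j \in \{-1, 0, +1\}$. The Gaussian profile alone is log-concave with slack $\exp(1/(k\sigma_r^2))$, so as soon as $\varepsilon_{k,r}$ is shown to be smaller than this slack, the conjecture follows. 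A useful complementary tool is the recurrence
\[
c_{n,k} \;=\; \frac{k}{n}\sum_{m \geq 1} \sigma_1(m)\, c_{n-m,\,k-1},
\]
which comes from differentiating $f^k$ and using $f'(q) = \sum_m \sigma_1(m) q^{m-1}$.

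The main obstacle is sharpness. The Gaussian slack $1/(k\sigma_r^2)$ is of order $k/n^2$, which becomes miniscule when $n$ is close to $C^k$, while a crude LCLT gives only $\varepsilon = O(1/k)$. Overcoming this would require a genuine Edgeworth expansion (with explicit third- and fourth-cumulant corrections) of the tilted distribution, together with uniform control of $|f(re^{i\theta})/f(r)|$ for $\theta$ away from $0$ to discard the off-axis contour contribution. An attractive alternative I would pursue in parallel is a direct induction on $k$ using the recurrence above: carry an enhanced statement $c_{n,k}^2 \geq (1 + \delta_{n,k})\, c_{n-1,k} c_{n+1,k}$ with slack $\delta_{n,k}$ chosen large enough to absorb the $n^2/(n^2 - 1)$ loss introduced by the $k/n$ prefactor. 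The delicate point there is that the kernel $\sigma_1(m)$ is itself not log-concave, so one must track carefully how convolution against it deforms the slack $\delta_{n,k}$; the exponential range $n \leq C^k$ should give just enough room for such a bootstrap to close.
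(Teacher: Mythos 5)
This statement is Conjecture 1 of the paper, and the paper does not prove it: the authors explicitly state that they are unable to establish it and support it only with the table of values $n_0(k)$, so there is no paper proof to compare against. Your submission is likewise not a proof but a strategy outline, and you say so yourself. The algebraic scaffolding you set up is sound: the tilted distribution $P_r(m)=\sigma_{-1}(m)r^m/f(r)$ does give $P(S_k=n)=c_{n,k}r^n/f(r)^k$, log-concavity is indeed invariant under the log-affine tilt, the saddle calibration $rf'(r)/f(r)=n/k$ with $t\asymp k/n$ is correct, and the recurrence $n\,c_{n,k}=k\sum_{m\ge 1}\sigma_1(m)c_{n-m,k-1}$ follows correctly from $f'(q)=\sum_m\sigma_1(m)q^{m-1}$.

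The genuine gap is exactly the step you flag and then defer. The log-concavity slack of the Gaussian profile is $\exp(1/(k\sigma_r^2))$ with $k\sigma_r^2\asymp n^2/k$, so you need the \emph{relative} error in the local limit theorem (more precisely, its second difference in $n$) to be $o(k/n^2)$, uniformly for $t\asymp k/n$ which may be as small as $kC^{-k}$; no existing LCLT or routine Edgeworth expansion delivers this, and the required minor-arc bound on $|f(re^{i\theta})/f(r)|$ with $r$ exponentially close to $1$ runs directly into the arithmetic irregularity of $\sigma_{-1}$ near roots of unity, which is the real source of difficulty. The paper's numerics show that log-concavity genuinely fails just beyond $n\approx n_0(k)$ with $n_0(k)$ roughly doubling in $k$, so the conjectured range $n\le C^k$ sits essentially at the breakdown threshold: any viable argument must be sharp precisely where your error terms are weakest, and a generic ``CLT plus corrections'' bound cannot distinguish $n\le C^k$ from $n$ slightly larger, where the statement is false. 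The alternative induction on $k$ has the same problem in a different guise: since the kernel $\sigma_1(m)$ is not log-concave, you give no mechanism by which the slack $\delta_{n,k}$ survives convolution, and the observed near-doubling of $n_0(k)$ suggests any such bootstrap must encode quantitative cancellation in $\sigma_1$ that the proposal does not identify. So the proposal is a reasonable research plan, consistent with why the authors left this as a conjecture, but it does not constitute a proof.
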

\begin{remark}
In the last section we offer some numerical computations with regard to this conjecture. We believe that $C=2$ is a viable value in the Conjecture.
\end{remark}
We show that \cref{conjecture} implies Heim and Neuhauser's conjecture for $n$ sufficiently large.
\begin{theorem}
\label{thm:main-2}
If \cref{conjecture} is true, then for all $n \gg \log^{-7} C + 1$, the polynomial $Q_n(z)$ is unimodal. 
\end{theorem}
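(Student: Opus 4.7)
The plan is to close the gap left by Theorem \ref{thm:main-1}, which gives log-concavity of $A_{n,k}$ in $k$ for $k \leq K_1 := \kappa n^{1/6}/\log n$ and monotone decrease for $k \geq K_2 := \kappa'\sqrt{n}\log n$. Assuming Conjecture \ref{conjecture}, the task is to establish unimodality of $(A_{n,k})_{k}$ throughout the intermediate range $K_1 < k < K_2$; gluing with the two known ranges then yields global unimodality.

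The first step is to bridge $A_{n,k}$ to the $c_{l,k}$ appearing in the conjecture. Taking logarithms of \eqref{eq:defn} gives $\sum_n Q_n(z)q^n = \exp((z+1)f(q))$, and splitting $\exp((z+1)f(q)) = \exp(f(q))\cdot\exp(zf(q))$ with $\exp(f(q)) = \prod_m(1-q^m)^{-1} = \sum_n p(n)q^n$ yields the identity
$$A_{n,k} = \frac{1}{k!}\sum_{l=k}^n p(n-l)\,c_{l,k}.$$

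To analyze the monotonicity of $A_{n,k+1}/A_{n,k}$ in the gap, I would introduce the probability measure $\mu_{n,k}(l)\propto p(n-l)\,c_{l,k}$. Using the convolution identity $c_{l,k+1} = \sum_{m\geq 1}\sigma_{-1}(m)\,c_{l-m,k}$, the ratio becomes
$$\frac{A_{n,k+1}}{A_{n,k}} = \frac{1}{k+1}\sum_{m\geq 1}\sigma_{-1}(m)\,\mathbb{E}_{L\sim\mu_{n,k}}\!\left[\frac{p(n-L-m)}{p(n-L)}\right].$$
For $k$ in the gap satisfying $k \geq \log n/\log C$ (so that $n\leq C^k$), Conjecture \ref{conjecture} gives log-concavity of $c_{l,k}$ in $l$ for all $l\leq n$. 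Combined with log-concavity of the partition function (DeSalvo--Pak), $\mu_{n,k}$ is a log-concave distribution and so concentrated around a unique mode $l^*(n,k)$. The analytic heart of the argument is to show that $l^*(n,k)$ varies smoothly enough with $k$ that the right-hand side above crosses $1$ at most once (from above) as $k$ sweeps through the gap.

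The threshold $n \gg (\log C)^{-7}$ in the theorem arises from requiring the conjecture-valid range $k \geq \log n/\log C$ to overlap the log-concavity range $k \leq K_1$, absorbing the necessary logarithmic factors. The main obstacle is this last quantitative step: qualitative log-concavity of $\mu_{n,k}$ gives existence of a mode but not sharp control on how it shifts with $k$, and the $\sigma_{-1}(m)$-weighted average of ratios $p(n-L-m)/p(n-L)$ must be estimated uniformly, presumably by exploiting Hardy--Ramanujan-type asymptotics for $p(n)$ together with tail bounds on $\mu_{n,k}$ derived from the log-concavity.
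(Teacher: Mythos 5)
Your setup is right---the bridge identity $A_{n,k}=\frac{1}{k!}\sum_l p(n-l)c_{l,k}$, the observation that \cref{conjecture} supplies log-concavity of $c_{l,k}$ in $l$ precisely when $n\le C^k$, and the role of the threshold $n\gg \log^{-7}C$ in making the conjecture-valid range overlap the range covered by Theorem \ref{thm:main-1}(1)---but the proof is not complete, and you say so yourself. The step you call the ``analytic heart'' (showing that the mode $l^*(n,k)$ of $\mu_{n,k}$ shifts smoothly in $k$, so that the weighted ratio $\frac{1}{k+1}\sum_m\sigma_{-1}(m)\,\mathbb{E}[p(n-L-m)/p(n-L)]$ crosses $1$ at most once as $k$ sweeps the gap) is exactly the content that needs to be proved, and log-concavity of $\mu_{n,k}$ in $l$ for each fixed $k$ does not by itself give any control on how this quantity varies \emph{in $k$}; single-crossing in $k$ is a genuinely different statement from concentration in $l$, and no mechanism for it is offered. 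There is also a gluing issue left implicit: unimodality of the full sequence requires the crossing behaviour in the middle range to be compatible with the peak location coming from the log-concave initial segment, which your sketch does not address.

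The paper closes the gap by a different and much more economical route that avoids mode-tracking entirely: it proves log-concavity of $A_{n,k}$ \emph{in $k$} throughout the middle range. Assuming \cref{conjecture}, one shows (Lemma \ref{lemma:improved-a}) that $A_{n,k}=\bigp{1+O(e^{-n^{0.1}})}\widehat{A}_{n,k}$ where $\widehat{A}_{n,k}=\frac{1}{k!}\sum_{i\le n-26}p(n-i)c_{i,k}$ is log-concave in $n$, being a convolution of the conjecturally log-concave $c_{\cdot,k}$ with the DeSalvo--Pak log-concave tail of $p$ (Hoggar's theorem); the trimming of the last $25$ terms is justified by a decay estimate for $c_{n,k}/c_{n-l,k}$ that itself uses the conjectured log-concavity. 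Then, exactly as in the proof of Theorem \ref{thm:main-1}(1), applying Lemma \ref{lemma:recursive} with $(a,b)=(k,k+1)$ and $(k,k+2)$ and the inequality $\widehat{A}_{n,k}\widehat{A}_{n-i-j,k}\le \widehat{A}_{n-i,k}\widehat{A}_{n-j,k}$ yields
\begin{equation*}
A_{n,k}A_{n,k+2}\le \frac{k}{k+1}\bigp{1+O\bigp{e^{-n^{0.1}}}}A_{n,k+1}^2 < A_{n,k+1}^2,
\end{equation*}
so the sequence is log-concave in $k$ on the whole range $n^{1/6}/\log n\ll k\ll \sqrt{n}\log n$, and gluing with Theorem \ref{thm:main-1} is immediate. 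If you want to salvage your ratio/mode approach you would need quantitative monotonicity of the ratio in $k$, which is substantially harder than what the convolution-plus-Hoggar argument requires; I recommend redirecting the probabilistic intuition into proving log-concavity in $k$ via the recursion, as above.
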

\begin{remark}
We believe that \cref{conjecture} may imply that for all sufficiently large $n$, the polynomial $Q_n(z)$ is log-concave.
\end{remark}
\section*{Acknowledgements}
We would like to thank Professor Ken Ono for the proposal of the project and his guidance. We thank Professor Bernhard Heim and Professor Markus Neuhauser for carefully reviewing the transcript and providing helpful feedbacks. We also thank Jonas Iskander for many helpful discussions. The research was supported by the generosity of the National Science Foundation under grant DMS-2002265, the National Security Agency under grant H98230-20-1-0012, the Templeton World Charity Foundation, and the Thomas Jefferson Fund at the University of Virginia.

\section{Proof of Theorem \ref{thm:main-1}(1)}
\label{sec:2}
In this section we show Theorem \ref{thm:main-1}(1), which establishes the log-concavity of $A_{n,k}$ when $k \ll n^{1/6} / \log n$. Our proof is organized as follows. In \cref{lemma:recursive}, we establish a key recursive formula for $A_{n,k}$ which relates it to $c_{n,k}$. This allows us to translate log-concavity into an asymptotic estimate for $A_{n,k}$. In \cref{lemma:c-asymp}, we prove an explicit estimate for $c_{n,k}$. This estimate allows us to show, in \cref{lemma:a-asymp}, that $A_{n,k}$ is close to a log-concave sequence in $n$, which is enough to show the desired log-concavity for $k$.
\subsection{A recursive formula for $A_{n,k}$}
Our proof is centered around the following observation.
\begin{lemma}
\label{lemma:recursive}
For any non-negative integers $a < b$, and any $n \geq b$, we have
$$A_{n,b} = \frac{a!}{b!}\sum_{i = 0}^n A_{n - i, a}c_{i, b - a}.$$
\end{lemma}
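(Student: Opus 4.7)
The plan is to translate the claim into a generating function identity and then read off the coefficient of $q^n$. Taking the logarithm of the Nekrasov--Okounkov product and expanding $-\log(1-q^m) = \sum_{j\geq 1} q^{mj}/j$, the double sum rearranges (grouping by $n = mj$ and summing over divisors) to
$$\log \prod_{m\geq 1}(1-q^m)^{-z-1} \;=\; (z+1)\sum_{n\geq 1}\sigma_{-1}(n)\, q^n \;=\; (z+1) f(q).$$
Combined with \eqref{eq:defn}, this yields the clean factorization
$$\sum_{n \geq 0} Q_n(z)\, q^n \;=\; e^{f(q)}\, e^{z f(q)}.$$

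Next I would extract the coefficient of $z^k$ on both sides. Using $e^{z f(q)} = \sum_{k\geq 0} z^k f(q)^k/k!$, this gives, for every $k\geq 0$, the key identity
$$\sum_{n\geq 0} A_{n,k}\, q^n \;=\; \frac{f(q)^k}{k!}\, e^{f(q)}.$$
Applying this with $k=a$ and with $k=b$ and cancelling the common factor $e^{f(q)}$ yields
$$b!\sum_{n\geq 0} A_{n,b}\, q^n \;=\; a!\, f(q)^{\,b-a}\sum_{n\geq 0} A_{n,a}\, q^n.$$

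Finally I would expand $f(q)^{b-a} = \sum_{i\geq 0} c_{i,b-a}\, q^i$ and take the coefficient of $q^n$ via the Cauchy product, which produces precisely the stated recursion. Since $f(q)$ has no constant term, $c_{i,b-a}$ vanishes for $i < b-a$, so the sum truncates naturally and there is no issue at $i=0$; the hypothesis $n \geq b$ in the statement merely ensures that $A_{n,b}$ is not forced to vanish for degree reasons, and is not actually used in the derivation. The whole argument is a bookkeeping exercise in generating functions once the factorization $e^{f(q)}\cdot e^{z f(q)}$ is in hand, so there is no genuine obstacle; the only step meriting any care is the interchange of summations in the logarithmic expansion, which is legitimate in the formal power series ring $\mathbb{Q}[z][[q]]$.
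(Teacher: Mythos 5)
Your proof is correct and follows essentially the same route as the paper: both arguments establish the key identity $\sum_{n\ge 0} A_{n,k}q^n = \frac{1}{k!}f(q)^k\prod_{m\ge 1}(1-q^m)^{-1}$ (your extraction of the $z^k$-coefficient from $e^{f(q)}e^{zf(q)}$ is the same operation as the paper's $k$-fold differentiation in $z$ at $z=0$), and then compare the cases $k=a$ and $k=b$ and read off the coefficient of $q^n$. Your side remark that the hypothesis $n\ge b$ is not actually needed is also accurate.
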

\begin{proof}
We first note that $f(q)$ is equal to the log derivative of
$$\prod_{m = 1}^\infty (1 - q^m)^{-1}.$$
Let $k$ be any non-negative integer. In \cref{eq:defn}, taking derivative with respect to $z$ for $k$ times, then setting $z = 0$, we get\footnote{Throughout this paper, we define $A_{n,k}$ or $c_{n,k}$ to be $0$ for all undefined subscripts.}
\begin{equation}
    \sum_{n = 0}^\infty A_{n,k} q^n = \frac{1}{k!}f^k(q) \prod_{m = 1}^\infty (1 - q^m)^{-1}.
\end{equation}
Applying the above for $k = a$ and $k = b$, we obtain
\begin{equation}
    \sum_{n = 0}^\infty A_{n,b} q^n = \frac{a!}{b!}f^{b - a}(q) \sum_{n = 0}^\infty A_{n,a} q^n.
\end{equation}
The lemma then follows from the definition of $c_{n, k}$.
\end{proof}
\subsection{An asymptotic for $c_{n,k}$ and $A_{n,k}$}
We now apply Lemma \ref{lemma:recursive} on $(a,b) = (0, k)$, giving 
\begin{equation}
\label{eq:recursive-2}
    A_{n,k} = \frac{1}{k!}\sum_{i = 0}^n A_{n - i, 0}c_{i,k}.
\end{equation}
We observe that $A_{n - i, 0 } = p(n - i)$, where $p(n)$ is the partition function, which satisfies a well-known asymptotic obtained by Hardy and Ramanujan that we shall use below. Thus, to understand the behavior of $A_{n, k}$, it suffices to estimate $c_{i, k}$. However, we are only able to obtain a very crude estimate.
\begin{lemma}
\label{lemma:c-asymp}
For any positive integers $n \geq 2, k$ with $n \geq k^2$, we have
$$\sum_{m \leq n}c_{m, k} = \bigp{\frac{\pi^2}{6}}^k\binom{n}{k}\bigp{1 + O\bigp{\frac{k^2 \log n}{n}}}.$$
\end{lemma}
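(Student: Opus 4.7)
The plan is to express the partial sum as a weighted lattice-point count and then estimate it via a Minkowski-style volume sandwich. Starting from the identity
$$f(q) = \sum_{d \geq 1} \frac{1}{d} \cdot \frac{q^d}{1 - q^d} = \sum_{d, j \geq 1} \frac{q^{dj}}{d},$$
expanding $f^k(q)$ and collecting coefficients yields
$$\sum_{m \leq n} c_{m, k} = \sum_{d_1, \ldots, d_k \geq 1} \frac{N_k(n; d_1, \ldots, d_k)}{d_1 \cdots d_k},$$
where $N_k(n; d_1, \ldots, d_k) := \#\{(e_1, \ldots, e_k) \in \mathbb{Z}_{\geq 1}^k : \sum_i d_i e_i \leq n\}$. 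This converts an analytic identity into a weighted lattice-point counting problem.

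The core step is a sharp two-sided bound on $N_k$: setting $D := d_1 + \cdots + d_k$, I claim that whenever $D \leq n$,
$$\frac{(n - D)^k}{k! \, d_1 \cdots d_k} \leq N_k(n; d_1, \ldots, d_k) \leq \frac{n^k}{k! \, d_1 \cdots d_k}.$$
The lower bound is immediate from comparing the lattice count to the volume of the continuous simplex $\{e \in \mathbb{R}_{\geq 1}^k : \sum d_i e_i \leq n\}$. For the upper bound, attach to each admissible tuple $(e_1, \ldots, e_k)$ the unit cube $\prod_i [e_i - 1, e_i)$; these cubes are pairwise disjoint and all lie inside $\{x \in \mathbb{R}_{\geq 0}^k : \sum_i d_i x_i \leq n\}$ (since $\sup$ of $\sum d_i x_i$ over the cube equals $\sum d_i e_i \leq n$), and the latter has volume $n^k/(k! \prod d_i)$. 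The algebraic identity $(n - D) + D = n$ is crucial: it is what makes the Minkowski-expanded upper bound collapse to just $n^k/(k!\prod d_i)$ rather than something much larger. By the mean value theorem, $n^k - (n - D)^k \leq k D n^{k-1}$, yielding
$$\left| N_k(n; d_1, \ldots, d_k) - \frac{n^k}{k! \prod d_i} \right| \leq \frac{k D n^{k-1}}{k! \prod d_i}.$$

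Summing over $d_1, \ldots, d_k$, the main term is $\frac{n^k}{k!} \sum_{D \leq n} \prod d_i^{-2}$, which equals $\frac{n^k}{k!}(\pi^2/6)^k(1 + O(k^2/n))$ by $\sum_{d \geq 1} d^{-2} = \pi^2/6$ together with the tail bound $\sum_{d > n/k} d^{-2} \leq k/n$ (if $D > n$ then some $d_i > n/k$, and there are $k$ choices of such an index). The error from the volume bound contributes at most
$$\frac{k n^{k-1}}{k!} \sum_{d_1, \ldots, d_k \leq n} \frac{D}{\prod d_i^2} = \frac{k^2 n^{k-1}}{k!} \cdot O(\log n) \cdot \left(\frac{\pi^2}{6}\right)^{k-1}$$
by symmetry in the indices and $\sum_{d \leq n} d^{-1} = O(\log n)$; relative to the main term this is $O(k^2 \log n / n)$. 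Finally, $n^k/k! = \binom{n}{k}(1 + O(k^2/n))$, and the hypothesis $n \geq k^2$ ensures this multiplicative correction is absorbed into the stated error.

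The main obstacle I anticipate is establishing the sharpness of the volume sandwich in the middle step. A naive induction on $k$ that estimates $N_k(n; \cdot)$ via the one-variable slice identity introduces a multiplicative constant growing linearly in $k$ (error accumulates across the $k$ integration steps), which degrades the final bound to $O(k^3 \log n / n)$. The Minkowski trick exploiting $(n - D) + D = n$ is precisely what preserves the exponent $2$ on $k$ in the error and matches the statement of the lemma.
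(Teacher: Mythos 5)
Your proposal is correct and follows essentially the same route as the paper: expanding $\sigma_{-1}$ (via the divisor/geometric-series form of $f$) to reduce the partial sum to a weighted count of lattice points in simplices, sandwiching that count between the volumes of $\{\sum d_ix_i\le n\}$ and $\{\sum d_ix_i\le n-D\}$ via unit cubes, and bounding the resulting error sum by $k^2(\pi^2/6)^{k-1}n^{k-1}\log n/k!$. The only differences are cosmetic (mean value theorem in place of Bernoulli's inequality, and a slightly different truncation of the main-term sum), so no further comment is needed.
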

\begin{proof}
We first note that
\begin{align}
\label{eq:step-1}
\begin{split}
  \sum_{m \leq n}c_{m, k} &= \sum_{a_1 + \cdots + a_k \leq n} \sigma_{-1}(a_1)\cdots \sigma_{-1}(a_k)  \\
  &= \sum_{a_1 + \cdots + a_k \leq n} \sum_{d_1 | a_1, d_2 | a_2,\cdots, d_k | a_k}\frac{1}{d_1d_2\cdots d_k}\\
  &= \sum_{d_1,\cdots, d_k = 1}^n \frac{1}{d_1d_2\cdots d_k}\#\{(x_1,\cdots,x_k)\in \mathbb{Z}_{>0}^k: d_1x_1 + \cdots d_kx_k \leq n\}.\\
 \end{split}
\end{align}
We now show that 
\begin{equation}
\label{eq:counting-points}
\abs{\#\{(x_1,\cdots,x_k)\in \mathbb{Z}_{>0}^k: d_1x_1 + \cdots d_kx_k \leq n\} - \frac{n^k}{k! d_1\cdots d_k}} \leq \frac{n^{k - 1}(d_1 + \cdots d_k)}{(k - 1)! d_1\cdots d_k}.
\end{equation}
For each $(x_1,\cdots,x_k)\in \mathbb{Z}_{>0}^k$ that satisfies $d_1x_1 + \cdots d_kx_k \leq n$, we place a unit cube with the uppermost vertex at $(x_1,\cdots,x_k)$. The union of the cubes are contained in the region 
\begin{equation*}
S_U = \{(x_1,\cdots,x_k)\in \mathbb{R}_+^k: d_1x_1 + \cdots d_kx_k \leq n\}    
\end{equation*}
and contain the region
\begin{equation*}
S_L = \{(x_1,\cdots,x_k)\in \mathbb{R}_+^k: d_1x_1 + \cdots d_kx_k \leq n - d_1 - \cdots - d_k\}.
\end{equation*}
If we denote $V$ as volume, then we have
$$V(S_U) = \frac{n^k}{k!d_1\cdots d_k},$$
and
$$V(S_L) = \frac{(n - d_1 - \cdots - d_k)^k}{k!d_1\cdots d_k}\geq \frac{n^k - k(d_1 + \cdots + d_k)n^{k - 1}}{k!d_1\cdots d_k}$$ by Bernoulli inequality.
Thus we get \eqref{eq:counting-points}.

Plugging \eqref{eq:counting-points} to \eqref{eq:step-1}, we conclude that
\begin{align*}
    \sum_{m \leq n} c_{m, k} &= R + \sum_{d_1,\cdots, d_k = 1}^n \frac{1}{d_1d_2\cdots d_k}\frac{n^k}{k! d_1\cdots d_k} \\
    &= R + \bigp{\frac{\pi^2}{6}}^k\binom{n}{k}\bigp{1 + O\bigp{\frac{k^2}{n}}}
\end{align*}
where the error term $R$ is controlled by
\begin{align*}
    \abs{R} &\leq \sum_{d_1,\cdots, d_k = 1}^n \frac{1}{d_1d_2\cdots d_k}\frac{n^{k - 1}(d_1 + \cdots d_k)}{(k - 1)! d_1\cdots d_k} \\ 
    &= k \frac{n^{k - 1}}{(k - 1)!} \sum_{d_1,\cdots, d_k = 1}^n \frac{1}{d_1d_2^2d_3^2\cdots d_k^2} \\
    &\ll \frac{n^{k - 1}}{(k - 1)!} \cdot k\bigp{\frac{\pi^2}{6}}^{k - 1}\log n.
\end{align*}
Combining the error terms, we conclude the lemma.
\end{proof}
With this lemma we can get an asymptotic estimate for $A_{n,k}$ when $n$ is much larger than $k$.
\begin{lemma}
\label{lemma:a-asymp}
For any positive integers $n \geq 2, k$ with $n \geq k^4 \log^4 k$, we have
\begin{align*}
A_{n,k}= \bigp{1 + O\bigp{(2\log n)^{- k} + \frac{k^2 \log^2 n}{\sqrt{n}}}}\frac{1}{k!} \bigp{\frac{\pi^2}{6}}^k\sum_{i = 0}^n p(n - i) \binom{i - 1}{k - 1}.
\end{align*}
\end{lemma}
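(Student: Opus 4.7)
The plan is to apply the recursive formula \eqref{eq:recursive-2}, which writes $A_{n,k} = \frac{1}{k!}\sum_{i=0}^n p(n-i)\,c_{i,k}$, and to replace each $c_{i,k}$ by its expected mean $(\pi^2/6)^k \binom{i-1}{k-1}$. Since \cref{lemma:c-asymp} controls only the cumulative sum of $c_{i,k}$, the passage to individual terms is effected by Abel summation. Define $T_i := \sum_{m \leq i} c_{m,k} - (\pi^2/6)^k \binom{i}{k}$; by \cref{lemma:c-asymp}, $|T_i| \ll (\pi^2/6)^k \binom{i}{k}\,k^2 \log n/i$ for $i \geq k^2$, and $T_{-1} = 0$. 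Using the identity $\binom{i}{k} - \binom{i-1}{k} = \binom{i-1}{k-1}$, summation by parts yields
\begin{equation*}
\sum_{i=0}^n p(n-i)\,c_{i,k} - \left(\frac{\pi^2}{6}\right)^k \sum_{i=0}^n p(n-i)\binom{i-1}{k-1} \;=\; T_n + \sum_{i=0}^{n-1}\bigl(p(n-i) - p(n-i-1)\bigr)\,T_i.
\end{equation*}
It remains to show that the right-hand side is $O\bigl((2\log n)^{-k} + k^2\log^2 n/\sqrt{n}\bigr)$ relative to $M := (\pi^2/6)^k\sum_{i=0}^n p(n-i)\binom{i-1}{k-1}$.

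I split the error sum at $i = k^2$. For $i \geq k^2$, the Lemma bound gives $|T_i| \ll (\pi^2/6)^k k\log n\,\binom{i-1}{k-1}$, and the Hardy--Ramanujan asymptotic implies $p(n-i) - p(n-i-1) \ll p(n-i)/\sqrt{n-i}$. On the bulk $n - i \geq n/2$ this produces $O(k\log n/\sqrt{n})\cdot M$; on the tail $n - i < n/2$ we use $p(n-i) \leq p(n/2)$ and $p(n/2)/p(n) = e^{-\Theta(\sqrt{n})}$ to obtain an exponentially smaller contribution. This handles the $k^2 \log^2 n/\sqrt{n}$ portion of the target. For $i < k^2$, I use the crude bound $|T_i| \leq 2(\pi^2/6)^k\binom{k^2}{k}$ (applying \cref{lemma:c-asymp} at $i = k^2$) combined with the telescoping identity $\sum_{i=0}^{k^2-1}(p(n-i)-p(n-i-1)) = p(n) - p(n-k^2) \ll p(n)\,k^2/\sqrt{n}$ (again Hardy--Ramanujan), producing a small-$i$ contribution of order $(\pi^2/6)^k\binom{k^2}{k}\,p(n)\,k^2/\sqrt{n}$. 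The boundary term $|T_n|$ is controlled by the large-$i$ estimate and is negligible.

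To translate the small-$i$ bound into the target relative error $(2\log n)^{-k}$, I need a matching lower bound on $M$. A saddle-point analysis of the summand $p(n-i)\binom{i-1}{k-1}$, viewed as a function of $i$, reveals a peak at $i_0 \sim (k/c)\sqrt{n}$ with $c = \pi\sqrt{2/3}$: expanding $\sqrt{n-i_0} \approx \sqrt{n} - i_0/(2\sqrt{n})$ gives $p(n-i_0) \asymp p(n)\,e^{-k}$, so the peak value is $\Theta\bigl(p(n)e^{-k}(k\sqrt{n})^{k-1}/(k-1)!\bigr)$. Summing over an interval of length $\Theta(k\sqrt{n})$ around $i_0$ on which these estimates are stable yields
\begin{equation*}
M \;\gg\; \left(\frac{\pi^2}{6}\right)^k p(n)\,\frac{k^{k+1} n^{k/2}}{k!}\,\beta^k
\end{equation*}
for an absolute constant $\beta > 0$. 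Combined with $\binom{k^2}{k} \leq (ek)^k$ and Stirling, the small-$i$ relative error becomes $\ll k\,n^{-1/2}(Ck/\sqrt{n})^k$ for an absolute $C$, and in the range $n \geq k^4\log^4 k$ one has $Ck/\sqrt{n} \leq C/(k\log^2 k) \leq 1/(2\log n)$ once $n$ exceeds some absolute threshold, giving the claimed $(2\log n)^{-k}$ bound.

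\textbf{Main obstacle.} The most delicate step is the saddle-point lower bound on $M$: one must verify, using only the Hardy--Ramanujan asymptotic $p(m) \sim e^{c\sqrt m}/(4m\sqrt 3)$, that the summand $p(n-i)\binom{i-1}{k-1}$ retains a constant fraction of its maximum over an explicit window of length $\Theta(k\sqrt{n})$ around $i_0$, uniformly in $k$ throughout the range of the lemma. The remaining ingredients -- Abel summation, the crude small-$i$ bound on $|T_i|$, and the Hardy--Ramanujan tail estimates -- are routine once this lower bound on $M$ is in hand.
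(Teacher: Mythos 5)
Your skeleton is the same as the paper's: start from \eqref{eq:recursive-2}, pass to cumulative sums of $c_{i,k}$ by partial (Abel) summation, insert Lemma \ref{lemma:c-asymp}, and treat the range where that lemma is weak separately. The real divergence is in how the small-$i$ range is disposed of, and that is where your write-up has an acknowledged hole. The paper splits at $i=\lfloor\sqrt n/\log n\rfloor$ and never needs a lower bound on the main term $M$: it bounds the small-$i$ contribution by $p(n)\sum_{j\le\sqrt n/\log n}c_{j,k}$ and compares it with the contribution of $i\in[2\sqrt n,3\sqrt n]$, which is at least $(p(n-\lceil2\sqrt n\rceil)-p(n-\lfloor3\sqrt n\rfloor))\sum_{j\le2\sqrt n}c_{j,k}\asymp p(n)\sum_{j\le2\sqrt n}c_{j,k}$; by Lemma \ref{lemma:c-asymp} the ratio of the two cumulative sums is essentially a ratio of binomial coefficients bounded by $(2\log n)^{-k}$, so the factor $(2\log n)^{-k}$ falls out with no analysis of where $p(n-i)\binom{i-1}{k-1}$ peaks. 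You instead split at $i=k^2$, and your error analysis then requires a quantitative lower bound on $M$ itself --- exactly the step you label the main obstacle and do not carry out. That bound is true and provable (a window of width $\asymp\sqrt{kn}$ around the peak $i_0\asymp k\sqrt n$, on which the summand is within a constant factor of its maximum, already suffices given the $\beta^k$ slack you allow), but as written your proof is incomplete at its pivotal point, whereas the paper's two-scale comparison sidesteps the issue entirely.

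Two further corrections. First, the summand $p(n-i)\binom{i-1}{k-1}$ is \emph{not} within a constant factor of its maximum over a window of length $\Theta(k\sqrt n)$: at distance $\Theta(k\sqrt n)$ from the peak it has dropped by $e^{-\Theta(k)}$ (the constant-fraction window has width $\asymp\sqrt{kn}$); your stated bound on $M$ survives only because of the $\beta^k$ slack, so the claim should be phrased that way. Second, the chain $Ck/\sqrt n\le C/(k\log^2k)\le1/(2\log n)$ is false for bounded $k$ and $n\to\infty$ (the middle quantity is then a fixed constant while $1/(2\log n)\to0$); the correct deduction is that $n\ge k^4\log^4 k$ gives $k\le n^{1/4}$, hence $Ck/\sqrt n\le Cn^{-1/4}\le1/(2\log n)$ once $n$ exceeds an absolute constant. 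A minor point in the same spirit: the ``crude bound'' $|T_i|\le2(\pi^2/6)^k\binom{k^2}{k}$ for $i<k^2$ is not what Lemma \ref{lemma:c-asymp} gives at $n=k^2$, where the relative error is $O(\log k)$ rather than $O(1)$; the extra $\log k$ is harmless but should be carried.
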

\begin{proof}
We recall the Hardy-Ramanujan formula for $p(n)$ in \cite{HR18},
\begin{equation}
    \label{eq:partition-asymptotic}
    p(n) \sim \frac{e^{\pi\sqrt{2/3}\sqrt{n}}}{4\sqrt{3} n}.
\end{equation}
By partial summing the recursive formula \eqref{eq:recursive-2}, we obtain
\begin{align*}
A_{n,k}= \frac{1}{k!} \sum_{i = 0}^n (p(n - i) - p(n - i - 1))\sum_{j \leq i}c_{j, k}
\end{align*}
where we let $p(-1) = 0$.\footnote{Note that $p(n)$ is monotone increasing, thus all the terms are positive.} Notice that the terms with small $i$ could be upper bounded, as we have
$$\sum_{i \leq \sqrt{n} / \log n} (p(n - i) - p(n - i - 1))\sum_{j \leq i}c_{j, k} \leq p(n) \sum_{j \leq \sqrt{n} / \log n}c_{j, k},$$
while we also have
$$\sum_{i \in [2\sqrt{n}, 3\sqrt{n}]} (p(n - i) - p(n - i - 1))\sum_{j \leq i}c_{j, k} \geq (p(n - \ceil{2\sqrt{n}}) - p(n - \floor{3\sqrt{n}})) \sum_{j \leq 2\sqrt{n}}c_{j, k}.$$
By the asymptotic formula for the partition number \eqref{eq:partition-asymptotic}, we obtain
$$p(n) \asymp p(n - \ceil{2\sqrt{n}}) - p(n - \floor{3\sqrt{n}}),$$
while by Lemma \ref{lemma:c-asymp}, we have, for $m = \floor{\sqrt{n} / \log n}$ and $m =  \floor{2\sqrt{n}}$, 
$$\sum_{j \leq m}c_{j, k} = \bigp{1 + O\bigp{\frac{k^2 \log m}{m}}}\bigp{\frac{\pi^2}{6}}^k\binom{m}{k}.$$
When $n \geq k^4\log^4 k$, the big-O term is small, and we get
$$\sum_{j \leq \sqrt{n} / \log n}c_{j, k} \ll (2\log n)^{- k}\sum_{j \leq 2\sqrt{n}}c_{j, k}.$$
Therefore, the terms with small $i$ is small compared with the terms $i \in [2\sqrt{n}, 3\sqrt{n}]$, and we obtain
\begin{align*}
A_{n,k}= \bigp{1 + O((2\log n)^{-k})}\frac{1}{k!} \sum_{i \in [\sqrt{n} / \log n, n]} (p(n - i) - p(n - i - 1))\sum_{j \leq i}c_{j, k}.
\end{align*}
We apply Lemma \ref{lemma:c-asymp}, and obtain
$$\sum_{j \leq i}c_{j, k} = \bigp{1 + O\bigp{\frac{k^2 \log i}{i}}} \bigp{\frac{\pi^2}{6}}^k\binom{i}{k}.$$
Since in the summation we have $i \geq \sqrt{n} / \log n$, we conclude that 
\begin{align*}
A_{n,k}= \bigp{1 + O\bigp{(2\log n)^{- k} + \frac{k^2 \log^2 n}{\sqrt{n}}}}\frac{1}{k!} \sum_{i \in [\sqrt{n} / \log n, n]} (p(n - i) - p(n - i - 1)) \bigp{\frac{\pi^2}{6}}^k\binom{i}{k}.
\end{align*}
We could go through the above argument again to sum starting from $1$ instead of $\sqrt{n} / \log n$ with a negligible error (dominated again by the terms $i \in [2\sqrt{n}, 3\sqrt{n}]$). Simplifying, we get the desired conclusion
\begin{align*}
A_{n,k}= \bigp{1 + O\bigp{(2\log n)^{- k} + \frac{k^2 \log^2 n}{\sqrt{n}}}}\frac{1}{k!} \bigp{\frac{\pi^2}{6}}^k\sum_{i = 1}^n p(n - i) \binom{i - 1}{k - 1}. \qed\popQED
\end{align*}
\end{proof}
\begin{remark}
From this lemma it is easy to derive an explicit asymptotic for $A_{n,k}$ as $n\to\infty$ by simply plugging in the asymptotic for $p(n)$. However, for our application it is simpler to leave $A_{n,k}$ unsimplified in the current form.
\end{remark}
\subsection{Proof of Theorem \ref{thm:main-1}(1)}
We now note that Lemma \ref{lemma:a-asymp} essentially tells us that $A_{n,k}$ is close to a log-concave sequence in $n$. This, together with an application of \cref{lemma:recursive}, directly implies the desired result.
\begin{lemma}
\label{lemma:pseudo-log-concave}
For $n \geq k^4\log^4 k$, we have
$$A_{n,k} = \bigp{1 + O\bigp{(2\log n)^{- k} + \frac{k^2 \log^2 n}{\sqrt{n}}}}\Tilde{A}_{n,k},$$
where $\Tilde{A}_{n,k}$ is a log-concave sequence in $n$.
\end{lemma}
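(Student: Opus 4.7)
The natural candidate is to set
$$\tilde A_{n,k} := \frac{1}{k!}\Bigl(\frac{\pi^2}{6}\Bigr)^k \sum_{i=1}^n p(n-i)\binom{i-1}{k-1},$$
so that the equation in the lemma reduces immediately to Lemma \ref{lemma:a-asymp}. The entire content is therefore to verify that $\tilde A_{n,k}$ is log-concave in $n$ for every fixed $k\geq 1$.

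The plan is to recognize $\tilde A_{n,k}$, up to the constant factor $\frac{1}{k!}(\pi^2/6)^k$, as the convolution
$$(a*b)_n = \sum_{i=0}^n a_{n-i} b_i,\qquad a_m := p(m),\quad b_m := \binom{m-1}{k-1}$$
(with $b_0 = 0$ and $b_m = 0$ for $1 \leq m < k$), and then invoke the standard fact that the convolution of two nonnegative log-concave sequences with contiguous support is again log-concave (this is Hoggar's theorem). Thus it suffices to check log-concavity of each factor separately.

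For the binomial factor, log-concavity in $n$ is a direct calculation: the inequality $\binom{n-1}{k-1}^2 \geq \binom{n-2}{k-1}\binom{n}{k-1}$ simplifies, after dividing by $\binom{n-2}{k-1}\binom{n-1}{k-1}$, to $(n-1)(n-k+1) \geq n(n-k)$, which is just $k \geq 1$. Also, the support $\{n\geq k\}$ is contiguous, so Hoggar's hypotheses are satisfied. For the partition function, log-concavity $p(n)^2 \geq p(n-1)p(n+1)$ for $n \geq 25$ is the theorem of DeSalvo and Pak (earlier conjectured by Chen and verified in all small cases by direct computation), and the support $\{n\geq 0\}$ of $p$ is obviously contiguous. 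Since we are in the regime $n \geq k^4 \log^4 k$, $n$ is much larger than $25$, so this hypothesis is harmless.

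Putting these ingredients together, Hoggar's theorem gives log-concavity of $(a*b)_n$ in $n$, and multiplying by the positive constant $\frac{1}{k!}(\pi^2/6)^k$ preserves log-concavity, yielding the claimed log-concavity of $\tilde A_{n,k}$ in $n$. The main (minor) obstacle is simply citing the correct log-concavity result for $p(n)$ and the convolution-preserves-log-concavity theorem; the rest is bookkeeping from Lemma \ref{lemma:a-asymp}.
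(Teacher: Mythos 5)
There is a genuine gap in your verification of Hoggar's hypotheses. You take $\tilde A_{n,k}$ to be the \emph{full} sum $\frac{1}{k!}(\pi^2/6)^k\sum_{i=1}^n p(n-i)\binom{i-1}{k-1}$ and apply Hoggar's theorem to the convolution of $a_m=p(m)$ (on support $m\ge 0$) with $b_m=\binom{m-1}{k-1}$. But the partition function is \emph{not} log-concave on its full support: DeSalvo--Pak only gives $p(m)^2\ge p(m-1)p(m+1)$ for $m>25$, and the inequality genuinely fails at small odd $m$ (e.g.\ $p(1)^2=1<2=p(0)p(2)$, $p(3)^2=9<10=p(2)p(4)$). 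Your remark that ``$n\ge k^4\log^4 k$ is much larger than $25$, so this hypothesis is harmless'' does not repair this: the convolution at index $n$ still contains the terms with $i$ close to $n$, i.e.\ the values $p(0),\dots,p(25)$, no matter how large $n$ is, so Hoggar's theorem simply does not apply to the sequence you defined, and the claimed log-concavity of your $\tilde A_{n,k}$ in $n$ is unproven.

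The paper's proof addresses exactly this point. It first shows that the tail terms $i\in[n-25,n]$ are negligible: by Hardy--Ramanujan, $p(\lceil\sqrt n\rceil)\gg e^{n^{1/5}}p(25)$ while $\binom{n-1}{k-1}\asymp\binom{n-\lceil\sqrt n\rceil-1}{k-1}$, so the single term $i=n-\lceil\sqrt n\rceil$ dominates all terms with $i\ge n-25$, and dropping them only perturbs the multiplicative factor $1+O\bigl((2\log n)^{-k}+k^2\log^2 n/\sqrt n\bigr)$. It then defines
$$\tilde A_{n,k}=\frac{1}{k!}\Bigl(\frac{\pi^2}{6}\Bigr)^k\sum_{i=1}^{n-26}p(n-i)\binom{i-1}{k-1},$$
which is a convolution of $\binom{i-1}{k-1}$ with the \emph{restricted} sequence $p(m)_{m\ge 26}$; on that range DeSalvo--Pak applies, and only then does Hoggar's theorem give log-concavity. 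So your proposal is missing both the truncation and the (short but necessary) estimate showing the truncation is harmless; without them the key log-concavity claim does not follow.
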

\begin{proof}
Lemma \ref{lemma:a-asymp} says that 
\begin{align*}
A_{n,k}= \bigp{1 + O\bigp{(2\log n)^{- k} + \frac{k^2 \log^2 n}{\sqrt{n}}}}\frac{1}{k!} \bigp{\frac{\pi^2}{6}}^k\sum_{i = 1}^n p(n - i) \binom{i - 1}{k - 1}.    
\end{align*}
By \cref{eq:partition-asymptotic}, we have $p(\ceil{\sqrt{n}}) \gg e^{n^{1/5}}p(25)$, while $\binom{n - 1}{k - 1} \asymp \binom{n - \ceil{\sqrt{n}} - 1}{k - 1}$. Thus the summation term with $i = n - \ceil{\sqrt{n}}$ dominate the tail terms $i \in [n - 25, n]$, and it follows that
\begin{align*}
A_{n,k}= \bigp{1 + O\bigp{(2\log n)^{- k} + \frac{k^2 \log^2 n}{\sqrt{n}}}}\frac{1}{k!} \bigp{\frac{\pi^2}{6}}^k\sum_{i = 1}^{n - 26} p(n - i) \binom{i - 1}{k - 1}.    
\end{align*}
By \cite{DP14}, the sequence $p(n)_{(n \geq 25)}$ is log-concave, and the binomial polynomial $\binom{n}{k - 1}$ is log-concave in $n$. Therefore, the series
$$\widetilde{A}_{n, k} = \frac{1}{k!} \bigp{\frac{\pi^2}{6}}^k\sum_{i = 1}^{n - 26} p(n - i) \binom{i - 1}{k - 1} $$
is the convolution of two log-concave series, and is therefore log-concave by Hoggar's Theorem(\cite{Hoggar}). We thus obtain the lemma.
\end{proof}
\begin{remark}
Numerical evidence suggests that for all $k \geq 2$, the sequence $A_{n,k}$ is log-concave in $n$. Unfortunately, this statement seems to be even harder than Heim and Neuhauser's conjecture.
\end{remark}
\begin{proof}[Proof of Theorem \ref{thm:main-1}(1)]
For convenience, we replace $k$ with $k + 1$. We use Lemma \ref{lemma:recursive} for $(a,b) = (k, k + 1)$ and $(a,b) = (k, k + 2)$, and get
\begin{equation}
\label{eq:final-step-1}
A_{n,k + 1} = \frac{1}{k}\sum_{i = 0}^n A_{n - i, k}\sigma_{-1}(i)    
\end{equation}
and
\begin{equation}
\label{eq:final-step-2}
A_{n,k + 2} = \frac{1}{k(k + 1)}\sum_{i,j \geq 0, i + j \leq n} A_{n - i - j, k}\sigma_{-1}(i)\sigma_{-1}(j).
\end{equation}
By \eqref{eq:partition-asymptotic}, for any $0 \leq x \leq n/2$, we have
$$p(\floor{n/2} + x) \gg e^{0.5\sqrt{n}}p(x).$$
Thus, comparing \cref{eq:recursive-2} term by term, for all $i \leq n / 2$, we have
$$A_{n - 1, k} \gg e^{0.5\sqrt{n}} A_{i, k}.$$
Since $\sigma_{-1}(i) \in [1, 2 + \log i]$, we conclude that the terms in \eqref{eq:final-step-1} with $i \geq \frac{n}{2}$ are all negligible compared to the term $i = 1$. Absorbing them into the error term, we get
$$A_{n,k + 1} = \bigp{1 + O\bigp{e^{-0.4\sqrt{n}}}}\frac{1}{k}\sum_{i = 0}^{\floor{n/2}} A_{n - i, k}\sigma_{-1}(i).$$
Applying Lemma \ref{lemma:pseudo-log-concave}, we get
$$A_{n,k + 1} = \bigp{1 + O\bigp{(2\log n / 2)^{- k} + \frac{k^2 \log^2 n}{\sqrt{n}}}}\frac{1}{k}\sum_{i = 0}^{\floor{n/2}} \Tilde{A}_{n - i, k}\sigma_{-1}(i).$$
Similarly, from \eqref{eq:final-step-2} and Lemma \ref{lemma:pseudo-log-concave} we get
$$A_{n,k} = \bigp{1 + O\bigp{(2\log n / 2)^{- k} + \frac{k^2 \log^2 n}{\sqrt{n}}}}\Tilde{A}_{n, k},$$
and
$$A_{n,k + 2} = \bigp{1 + O\bigp{(2\log n / 2)^{- k} + \frac{k^2 \log^2 n}{\sqrt{n}}}}\frac{1}{k(k + 1)}\sum_{i, j = 0}^{\floor{n/2}} \Tilde{A}_{n - i - j, k}\sigma_{-1}(i)\sigma_{-1}(j).$$
We note that by the log-concavity of $\Tilde{A}_{n,k}$, we have
\begin{align*}
    \Tilde{A}_{n, k}\sum_{i, j = 0}^{\floor{n/2}} \Tilde{A}_{n - i - j, k}\sigma_{-1}(i)\sigma_{-1}(j) \leq \sum_{i, j = 0}^{\floor{n/2}} \Tilde{A}_{n - i, k}\Tilde{A}_{n - j, k}\sigma_{-1}(i)\sigma_{-1}(j) = \bigp{\sum_{i = 0}^{\floor{n/2}} \Tilde{A}_{n - i, k}\sigma_{-1}(i)}^2.
\end{align*}
Thus, it follows that
$$\frac{A_{n, k}A_{n, k + 2}}{A_{n, k + 1}^2} \leq \frac{k}{k + 1}\bigp{1 + O\bigp{(2\log n / 2)^{- k} + \frac{k^2 \log^2 n}{\sqrt{n}}}}.$$
Since we assume that $n \gg k^6\log^6 k$, with the implicit constant sufficiently large, the big-O term is at most $\frac{1}{k}$. In this case, we get $A_{n, k}A_{n, k + 2} \leq A_{n, k + 1}^2$ as desired.
\end{proof}

\section{Proof of Theorem \ref{thm:main-1}(2)}
\subsection{Unsigned Stirling numbers of the first kind}
Let $\left[{n\atop m}\right]$ denote the absolute values of the Stirling numbers of the first kind, i.e. they satisfy $$\sum_m \left[{n\atop m}\right]t^m = t(t+1)\cdots (t+n-1),$$ and let $H_n$ denote the $n$-th harmonic number. Sibuya \cite{Sibuya88} proved the following inequality: $$\frac{\big[{n\atop m}\big]}{\big[{n\atop m-1}\big]}\le \frac{n-m+1}{(n-1)(m-1)}H_{n-1} \leq \frac{H_{n - 1}}{m - 1},$$ which gives us, for $m\ge 2H_n+1$, that
\begin{equation*}\begin{aligned}
\frac{\big[{n+1\atop m+t+1}\big]}{\big[{n+1\atop m+1}\big]} &\le \left(\frac{H_{n}}{m}\right)^t \leq 2^{-t}.
\end{aligned}
\end{equation*}

The following lemma is useful to our proof.
\begin{lemma}\label{lemma:stirling}
Let $r=\ceil{\log_2 n}$, and we are given a sequence $\{k_j\}$. Define $s_j=2\ceil{H_{k_j}}+r+1$ for $k_j\neq 0$ and $0$ otherwise, and take their sum $s=\sum_j s_j$. We have \begin{equation}\label{eq:monotone}\sum_{\substack{ \ l_1+\cdots +l_n=s, \\ l_j\le k_j.}}\prod_j \bigb{{k_j+1 \atop l_j+1}}\le \sum_{\substack{ \ l_1+\cdots +l_n=s-r, \\ l_j\le k_j.}}\prod_j \bigb{{k_j+1 \atop l_j+1}}.\end{equation}
\end{lemma}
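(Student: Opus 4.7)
The plan is to construct an at-most-$n$-to-one map $\phi$ from tuples $(l_1,\dots,l_n)$ counted by the left-hand side of \eqref{eq:monotone} to tuples $(m_1,\dots,m_n)$ counted by the right-hand side, and to show that under $\phi$ the product weight shrinks by a factor of at least $n$. Summing over preimages will then yield the inequality directly.

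The construction of $\phi$ proceeds as follows. Given an admissible $(l_1,\dots,l_n)$ with $\sum_j l_j = s$, a quick pigeonhole argument on $\sum_j s_j = s$ (after noting that indices with $k_j = 0$ are forced to $l_j = s_j = 0$ and can be ignored) produces some index $j^\ast$ with $s_{j^\ast} > 0$ and $l_{j^\ast} \ge s_{j^\ast}$. I will fix a canonical rule, say ``smallest such $j^\ast$,'' and define $\phi$ to decrease the $j^\ast$-th coordinate by $r$, leaving the others fixed. The bound $l_{j^\ast} - r \ge s_{j^\ast} - r = 2\lceil H_{k_{j^\ast}}\rceil + 1 \ge 1$ ensures the new entry is a valid index in $[0, k_{j^\ast}]$, and the total drops from $s$ to $s - r$, so $\phi$ does land in the index set on the right-hand side. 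Moreover, $\phi$ is at most $n$-to-one, since any preimage of a given $m$ is determined by the choice of which coordinate was shrunk.

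For the weight comparison, the condition $l_{j^\ast} \ge s_{j^\ast} = 2\lceil H_{k_{j^\ast}}\rceil + r + 1$ lands us exactly in the Sibuya regime. Applying the inequality quoted before the lemma with $n \leftarrow k_{j^\ast}$, $m \leftarrow l_{j^\ast} - r$, $t \leftarrow r$, I get
\[
\left[{k_{j^\ast}+1 \atop l_{j^\ast}+1}\right] \le 2^{-r}\left[{k_{j^\ast}+1 \atop l_{j^\ast}-r+1}\right] \le \frac{1}{n}\left[{k_{j^\ast}+1 \atop l_{j^\ast}-r+1}\right],
\]
using $2^r \ge n$. Since the other factors in the product are unchanged by $\phi$, the weight of $(l_1,\dots,l_n)$ is at most $1/n$ times the weight of $\phi(l_1,\dots,l_n)$. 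Grouping the sum by image and using $|\phi^{-1}(m)| \le n$ absorbs the factor $1/n$, proving \eqref{eq:monotone}.

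The one delicate point is the calibration in the definitions: $s_j = 2\lceil H_{k_j}\rceil + r + 1$ is chosen precisely so that pigeonhole always finds an ``overfull'' coordinate whose reduction by $r$ still satisfies the Sibuya hypothesis $m \ge 2H_n + 1$. Once that matching between pigeonhole and the Sibuya regime is noted, everything else is routine bookkeeping; I do not expect any serious obstacles beyond recording it carefully.
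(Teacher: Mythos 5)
Your proposal is correct and follows essentially the same argument as the paper: locate the first coordinate with $l_{j}\ge s_{j}$ (the paper takes the same "first such index" rule), shift it down by $r$, apply the Sibuya-derived bound $\bigb{{k+1\atop m+r+1}}\le 2^{-r}\bigb{{k+1\atop m+1}}\le \frac{1}{n}\bigb{{k+1\atop m+1}}$, and conclude via the at-most-$n$-to-one counting of preimages. The only difference is cosmetic: you spell out the pigeonhole step guaranteeing an overfull coordinate, which the paper leaves implicit.
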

\begin{proof}
Let $p$ be the index of the first term satisfying $l_p\ge s_p$. We write $l_j'=l_j$ for $j\neq p$ and $l_p'=l_p-r$. Recall that $$\bigb{{n+1\atop m+t+1}}\le 2^{-t}\bigb{{n+1\atop m+1}},$$ when $m\ge 2H_n+1$. We note that $$l_p'=l_p-r\ge s_p-r=2\ceil{H_{k_p}}+1,$$ and so we obtain \begin{equation}\label{eq:stirlingi}\prod_j \bigb{{k_j+1 \atop l_j+1}}\le 2^{-\ceil{\log_2n}}\prod_j \bigb{{k_j+1 \atop l_j'+1}}\le \frac{1}{n}\prod_j \bigb{{k_j+1 \atop l_j'+1}}.\end{equation}

For each tuple $(l_1,l_2,\ldots,l_n)$ such that $\sum_j l_j=s$ and $l_j\le k_j$, we let $i_0$ be the first $i$ such that $l_i \geq s_i$, and send it to $(l_1,\cdots, l_{i_0 - 1}, l_{i_0} - r, l_{i_0 + 1}, \cdots, l_n)$. Combining \eqref{eq:stirlingi} and this correspondence and noting that each term of the right hand side's summation of \eqref{eq:monotone} has at most $n$ preimages, we obtain our result.
\end{proof}

\begin{proof}[Proof of Theorem \ref{thm:main-1}(2)]
We let $k_j:=\#\{i\mid \lambda_i=j\}$. By Corollary 2 of \cite{HN18}, we have $$\sum_{k = 0}^n A_{n,k} z^k=\sum_{\abs{\lambda} = n}\prod_{j=1}^n\binom{k_j+z}{k_j}.$$ 

Since all the roots of the polynomial \begin{equation}\label{eq:realrooted}\prod_{j=1}^n\binom{k_j+z}{k_j}=\sum_k q_k z^k\end{equation} are real, the coefficients $\{q_k\}$ form a log-concave thus unimodal sequence \cite{Stanley06}. We next prove that the mode is at most $O(\sqrt{n}\log n)$ for these $k_j$ satisfying the obvious identity $k_1+2k_2+\cdots +nk_n=n$.

Using \eqref{eq:realrooted}, we directly calculate $$q_k=C_0\sum_{\substack{\ l_1+\cdots +l_n=k, \\ l_j\le k_j.}}\prod_j \bigb{{k_j+1 \atop l_j+1}},$$ where the constant $C_0=\prod_j \frac{1}{k_j!}$. By Lemma \ref{lemma:stirling}, we can compare that $$q_s\le q_{s-r}.$$ 

Since $\sum jk_j=n$, the sum $s=\sum s_j=\sum_{k_j\neq 0} O(\log n)$ is of the asymptotic $O(\sqrt{n}\log n)$. The unimodality of $\{q_k\}$ implies that $k\gg \sqrt{n}\log n$ exceeds the mode. Therefore, the coefficients $\{A_{n,k}\}$ are monotonically decreasing in $k$ as we desire. 
\end{proof}
\section{On Conjecture \ref{conjecture} and Theorem \ref{thm:main-2}}
\subsection{Proof of Theorem \ref{thm:main-2}}
As we have seen in Section \ref{sec:2}, the main setback in our method is that we are unable to derive a good asymptotic for $c_{n,k}$. Conjecture \ref{conjecture} is based on numerical computation, and its truth represents the obstruction for establishing Heim and Neuhauser's Conjecture for large $n$. In particular, its truth leads to a vastly improved form of Lemma \ref{lemma:pseudo-log-concave}.
\begin{lemma}
\label{lemma:improved-a}
Assume Conjecture \ref{conjecture}. Then for all $k\geq 2$ and $k^{3/2} \leq n \leq C^k$, we have 
\begin{equation}
    A_{n,k} = \bigp{1 + O\bigp{e^{-n^{0.1}}}}\widehat{A}_{n,k}
\end{equation}
where $\widehat{A}_{n,k}$ is a log-concave sequence in $n$.
\end{lemma}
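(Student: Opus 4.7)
The plan is to follow the blueprint of Lemma \ref{lemma:pseudo-log-concave}, but use Conjecture \ref{conjecture} to bypass the averaged proxy $\binom{i-1}{k-1}$ and work directly with the exact coefficients $c_{i,k}$. Starting from the exact recursion $A_{n,k} = \frac{1}{k!}\sum_{i=0}^{n} p(n-i)\,c_{i,k}$ of \eqref{eq:recursive-2}, I would define
\[
\widehat{A}_{n,k} := \frac{1}{k!}\sum_{i=k}^{n-26} p(n-i)\,c_{i,k},
\]
obtained by dropping the terms with $n-i \leq 25$, where the log-concavity of $p$ can fail.

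To prove $\widehat{A}_{n,k}$ is log-concave in $n$, I would realize it as a convolution $\frac{1}{k!}(\tilde p * \tilde c)(n)$, where $\tilde p(m) = p(m)\,\mathbf{1}[m \geq 25]$ and $\tilde c(i) = c_{i,k}\,\mathbf{1}[k \leq i \leq \lfloor C^k\rfloor]$. Each factor is log-concave in its own index (the first by \cite{DP14}, the second by Conjecture \ref{conjecture}), and the leading zeros cause no obstruction. By Hoggar's theorem, the convolution is log-concave in $n$.

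For the approximation, the removed error is $R_{n,k} = \frac{1}{k!}\sum_{j=0}^{25} p(j)\,c_{n-j,k} \leq \frac{26\,p(25)}{k!}c_{n,k}$, using that $c_{\cdot,k}$ is increasing on $[k,C^k]$ (a consequence of Conjecture \ref{conjecture} together with the partial-sum asymptotic in Lemma \ref{lemma:c-asymp}). To lower-bound $\widehat{A}_{n,k}$, I would single out the term at $i = n-\lceil\sqrt{n}\rceil$, giving $\widehat{A}_{n,k} \geq \frac{1}{k!}p(\lceil\sqrt{n}\rceil)\,c_{n-\lceil\sqrt{n}\rceil,k}$. By the Hardy--Ramanujan asymptotic \eqref{eq:partition-asymptotic}, $p(\lceil\sqrt{n}\rceil) \gg e^{n^{1/5}}p(25)$, so the target bound $R_{n,k}/\widehat{A}_{n,k} \ll e^{-n^{0.1}}$ reduces to controlling the ratio $c_{n,k}/c_{n-\lceil\sqrt{n}\rceil,k}$.

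This last estimate is the \textbf{main technical obstacle}. I would combine log-concavity of $\{c_{i,k}\}$ with Lemma \ref{lemma:c-asymp} as follows: the ratios $r_i := c_{i+1,k}/c_{i,k}$ are decreasing by Conjecture \ref{conjecture}, so if $r_{n-\lceil\sqrt{n}\rceil} \geq 1+\epsilon$ then $c_{n-\lceil\sqrt{n}\rceil,k} \geq (1+\epsilon)^{n-\lceil\sqrt{n}\rceil-k}$; on the other hand Lemma \ref{lemma:c-asymp} forces $c_{i,k} \ll (\pi^2/6)^k\binom{i}{k}$, which is only polynomial in $i$ of degree $k$. Comparing these two bounds yields $\epsilon \ll k\log n/n$, so
\[
\frac{c_{n,k}}{c_{n-\lceil\sqrt{n}\rceil,k}} \leq r_{n-\lceil\sqrt{n}\rceil}^{\lceil\sqrt{n}\rceil} \leq \exp\bigp{O\bigp{\tfrac{k\log n}{\sqrt{n}}}}.
\]
In the hypothesized range $n \geq k^{3/2}$ we have $k \leq n^{2/3}$, so this exponent is $O(n^{1/6}\log n) = o(n^{1/5})$. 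Combining everything, $R_{n,k}/\widehat{A}_{n,k} \ll \exp(-n^{1/5} + O(n^{1/6}\log n)) \ll e^{-n^{0.1}}$, giving the claim.
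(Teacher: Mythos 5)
Your proposal follows essentially the same route as the paper: start from the recursion \eqref{eq:recursive-2}, trim the terms with $n-i\leq 25$, get log-concavity of the trimmed convolution from DeSalvo--Pak, Conjecture \ref{conjecture} and Hoggar's theorem, and control the trimmed tail by playing the crude bound $c_{n,k}\leq \frac{n^k}{k!}(\pi^2/6)^k$ and the log-concavity ratio interpolation against the Hardy--Ramanujan lower bound for $p$; the paper singles out the term $l=\lceil n^{1/3}\rceil$ where you take $l=\lceil\sqrt{n}\rceil$, which makes no essential difference. One step deserves a caveat: your claim that $c_{\cdot,k}$ is increasing on $[k,C^k]$ does not clearly follow from Conjecture \ref{conjecture} plus Lemma \ref{lemma:c-asymp}. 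Log-concavity only lets you propagate a decrease up to $C^k$, so a very slow decrease beginning near the top of the range (say at $i$ close to $C^k$, with ratios like $1-\varepsilon$ for tiny $\varepsilon$) is not contradicted by the partial-sum asymptotic, since $\sum_{m\leq N}c_{m,k}$ with $N\leq C^k$ barely changes over the last $O(1)$ indices. Fortunately this claim is dispensable: the ratio-interpolation estimate you prove in your last paragraph applies verbatim to bound $c_{n-j,k}/c_{n-\lceil\sqrt{n}\rceil,k}\leq \exp\bigl(O(k\log n/\sqrt{n})\bigr)$ for every $j\leq 25$ (if the relevant ratios are below $1$ the bound is trivial), which is all that is needed to dominate $R_{n,k}$ by $e^{-n^{0.1}}\widehat{A}_{n,k}$; with that substitution your argument is complete and matches the paper's.
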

\begin{proof}
Recall that
\begin{equation*}
A_{n,k} = \frac{1}{k!}\sum_{i = 0}^n p(n - i)c_{i,k}.
\end{equation*}
The sequence is thus almost the convolution of two log-concave sequences. It suffices to trim away the terms $i \geq n - 25$. We first recall from the proof of Lemma \ref{lemma:c-asymp} that 
$$c_{n, k} \leq \frac{n^k}{k!}\bigp{\frac{\pi^2}{6}}^k.$$
Since $\{c_{i, k}\}_{i \leq n}$ is log-concave by the assumed \cref{conjecture}, for any $0 \leq l \leq n - k$ we have
$$\frac{c_{n,k}}{c_{n - l, k}} \leq \bigp{\frac{c_{n,k}}{c_{k, k}}}^{\frac{l}{n - k}}.$$
While by \eqref{eq:partition-asymptotic},
$$p(l) \gg e^{0.5 \sqrt{l}}.$$
Taking $l = \ceil{n^{1/3}}$, we conclude that 
$$p(l)c_{n - l, k} \gg e^{n^{0.1}}c_{n,k}.$$
Since $l > 25$ for $n > 25^3$, it follows that
\begin{equation*}
A_{n,k} = \bigp{1 + O\bigp{e^{-n^{0.1}}}}\frac{1}{k!}\sum_{i = 0}^{n - 26} p(n - i)c_{i,k}.
\end{equation*}
Since both $\{c_{i,k}\}_{(i \leq n)}$ and $\{p(n)\}_{(n \geq 26)}$ are log-concave sequences(again see \cite{DP14}), the sequence
$$\widehat{A}_{n,k} = \frac{1}{k!}\sum_{i = 0}^{n - 26} p(n - i)c_{i,k}$$
is the convolution of two log-concave sequences, thus is log-concave by Hoggar's Theorem(\cite{Hoggar}). So we have shown the lemma.
\end{proof}
\begin{proof}[Proof of Theorem \ref{thm:main-2}]
By Theorem \ref{thm:main-1}, it suffices to show, for all sufficiently large $n$ and $\frac{n^{1/6}}{\log n} \ll  k \ll \sqrt{n}\log n$, that
$$A_{n, k + 1}^2 \geq A_{n,k}A_{n, k + 2}.$$
Since $n \gg \log^{-7}C + 1$, for all $k$ in this range we have $2k^{3/2} \leq n \leq C^k$. By the proof of Theorem \ref{thm:main-1}, we get
$$A_{n,k + 1} = \bigp{1 + O\bigp{e^{-0.4\sqrt{n}}}}\frac{1}{k}\sum_{i = 0}^{\floor{n/2}} A_{n - i, k}\sigma_{-1}(i).$$
By Lemma \ref{lemma:improved-a}, we conclude that 
$$A_{n,k + 1} = \bigp{1 + O\bigp{e^{-n^{0.1}}}}\frac{1}{k}\sum_{i = 0}^{\floor{n/2}} \hat{A}_{n - i, k}\sigma_{-1}(i).$$
Similarly, we have
$$A_{n,k} = \bigp{1 + O\bigp{e^{-n^{0.1}}}}\hat{A}_{n,k}$$
and
$$A_{n,k + 2} = \bigp{1 + O\bigp{e^{-n^{0.1}}}}\frac{1}{k(k + 1)}\sum_{i, j = 0}^{\floor{n/2}} \hat{A}_{n - i - j, k}\sigma_{-1}(i)\sigma_{-1}(j).$$
We note that by the log-concavity of $\hat{A}_{n,k}$ for $k^{3/2} \leq n \leq C^k$, we have
\begin{align*}
    \hat{A}_{n, k}\sum_{i, j = 0}^{\floor{n/2}} \hat{A}_{n - i - j, k}\sigma_{-1}(i)\sigma_{-1}(j) \leq  \bigp{\sum_{i = 0}^{\floor{n/2}} \hat{A}_{n - i, k}\sigma_{-1}(i)}^2.
\end{align*}
Thus, we obtain the desired conclusion. Namely, we have that
\begin{align*}
   A_{n, k}A_{n, k + 2} \leq \frac{k}{k + 1}\bigp{1 + O\bigp{e^{-n^{0.1}}}} A^2_{n, k + 1}.\qed\popQED 
\end{align*}
\end{proof}
\subsection{Numerical Evidence for Conjecture \ref{conjecture}}
We are unable to show Conjecture \ref{conjecture}. Numerical evidence does suggest that Conjecture \ref{conjecture} is likely to hold for $C = 2$. Let $n_0(k)$ denote the smallest $n$ such that $c_{n, k}^2 < c_{n - 1, k}c_{n + 1, k}$. The following table shows the value of $n_0(k)$ for $2 \leq k \leq 13$.
\begin{center}
\begin{tabular}{ |c|c|c|c|c|c|c|c|c|c|c|c|c| } 
 \hline
 $k$ & 2 & 3&4&5&6&7&8&9&10&11&12&13\\ 
 \hline
 $n_0(k)$ & 6 & 21&39&73&135&251&475&917&1801&3595&7259&14787\\
 \hline
\end{tabular}
\end{center}
\begin{remark}
We also note that Conjecture \ref{conjecture} seems to generalize to other series whose terms display a similar behavior, such as 
$$f(z) = \frac{z}{1 - z} + \frac{z^2}{2(1 - z^2)}.$$
Investigating this phenomenon might be interesting on its own.
\end{remark}

\end{document}